\numberwithin{equation}{section}
\begin{document}

\title {Note on the best approximation in $L^1$ metric}
\author{ Alexey Solyanik }
\date{21 June 2016}
\dedicatory{To Yuriy Kryakin}
\address{Caribbean Sea, St. Marten, France}
\email{transbunker@gmail.com}
\newtheorem{remark}{Remark}
\newtheorem{theorem}{Theorem}
\newtheorem{lemma}{Lemma}
\newtheorem{proposition}{Proposition}
\newtheorem{corollary}{Corollary}
\newtheorem{example}{Example}
\maketitle
\vspace{1 cm}
%\begin{abstract}uuu.
%\end{abstract}
\section{Introduction}

In this note we present one of the approaches to find the best (or good) approximation of the given function by trigonometric polynomials in $L^1$ metric.

The method is as follows. First we try  to represent the given function as the sum of Bernoulli kernels. For this purpose we use Fourier Analysis. Then we apply well-known Favard Theorem \cite{favard_1} concerning best approximation of Bernoulli kernels in $L^1$ metric.

This method is not new and actually goes back to the second paper of J. Favard \cite{favard_2} where decomposition of the given function as the infinite sum of shifted Bernoulli kernel (convolution) was applied for good (and some times the best) approximation of smooth function.

The new ingredient is that we decompose a given function not only to the sum of shifted \textit{one} Bernoulli kernel but also to the series  of \textit{all } even (or odd) Bernoulli kernels.

We remind that for $r=1,2, \dots $
\begin{equation}
\label{intr_ber_r}
\mathcal{B}_r(\theta) =\sum_{k\neq 0} \frac{e^{ik\theta}}{(ik)^r} 
\end{equation}
are the \textit{Bernoulli kernels}.

If $m\geq 2$ this series converge absolutely on $\mathbb{T}=\mathbb{R}/2\pi \mathbb{Z}$  and hence its sum is a continued function. For $m=1$ this series converge everywhere but to the discontinued function with a jump equal $2\pi$ at the point $0$.

We note that our definition of Bernoulli kernels is different by the constant factor $2$ with the usual one (see e.g. \cite{devore},  p. 150) since we use $2\pi$ instead of $\pi$ in the definitions of norm, convolution and Fourier coefficients. Thus for $f\in L^1$
$$
\|f\|_{L^1}=\frac{1}{2\pi}\int_0^{2\pi}|f(\varphi)|d\varphi
$$
and
$$
\widehat{f}(k)=\frac{1}{2\pi}\int_0^{2\pi}f(\varphi)e^{-ik\varphi}  d\varphi
$$ We refer to \cite{katz} for background theory of Fourier series and for  the notations.

In the seminal paper \cite{favard_1} Jean Favard found the exact approximation of Bernoulli kernels by the trigonometric polynomials in $L^1$ metric:
\begin{equation}
\label{intr_fav_best}
E_{n-1}(\mathcal{B}_r)_{L^1}= \frac{K_r}{n^r} 
\end{equation}
where  
\begin{equation}
\label{intr_fav_const}
K_r=\frac{4}{\pi}\sum_{k\in 4\mathbb{Z}+1} \frac{1}{k^{r+1}}
\end{equation}
and trigonometric polynomial $\tau_{n-1}^r (\theta)$ of best approximation has explicit formula.

Here and later in the sequel 
$$
E_m(f)_X=\inf_{t\in T_m} \| f-t\|_X
$$
where $X$ is $L^1$ or $L^\infty$ and $T_m=\lbrace t(\varphi) : t(\varphi)=\sum_{|k|\leq m }c_k e^{ik\varphi} \rbrace$ is the space of trigonometric polynomials (with complex coefficients $c_k$) of the degree less or equal $m$.

Favard Theorem (\ref{intr_fav_best})  was a corner stone of many approximation theorems (see e.g. \cite{favard_1}, \cite{favard_2},  \cite{favard_3}, \cite{nik}, \cite{tem}, \cite{kry} or chapter 7 of the book \cite{devore}). In \cite{nik} S. M. Nikolsky applied (\ref{intr_fav_best})  to obtain estimates of approximation of Lipschitz functions  by algebraic polynomials on the interval $[-1,1]$ and observed that these estimates depended on the position of the  point $x$ on the interval (see e.g. chapter 8 of \cite{devore} or Section \ref{_sec_alg_appr} of this note for the precise definitions).

The starting point of this article was a question posed to the author by Y. V. Kryakin -----  to improve constants in  the Nikolsky type inequality 
\begin{equation}
\label{intr_alg_nik}
|f(x)-P_{n}(f,x)| \leq  \frac{C_1}{n}
\sqrt{1-x^2}+ \frac{C_2}{n^2}\sqrt{x^2}
\end{equation}
 After standard substitution $x=\cos\theta$  this problem at once reduced to the trigonometric case and as usual, the main ingredient of the proof is good (or best) approximation of the corresponding kernels $\mathcal{K}_1(\theta)=\mathcal{B}_1(\theta)\cos\theta $ and  $\mathcal{K}_2(\theta)= \mathcal{B}_1(\theta)\sin\theta$ in $L^1$-metric by trigonometric polynomials.

Trigonometric polynomial interpolating kernel $\mathcal{K} $ at the equidistant points (nodes) is the desired polynomial of the best approximation. In order to prove this one have to show that the difference between kernel and interpolating polynomial change sign at every node and only at these nodes. Unlike Bernoulli kernels the  kernels $\mathcal{K}_1$ and $\mathcal{K}_2$ are not an algebraic polynomials, but quasi-polynomials. This fact not allow to apply standard arguments ---  quasi-polynomials does not vanishing after many differentiation.  

We gave three different proofs of this important property which at once implies the desired estimates (Theorem \ref{theo_quasi_bern}). First one is based on the modification of the \textit{Sturm sequence} in the way of N. Tschebotarow \cite{tsch} . Second one is based on the application of \textit{Sturmian arguments} of zero set analysis (see e.g. \cite{gal} p. 188).

Unfortunately both proofs was not too short.

The third proof, which is based on the decomposition of kernel to the series of Bernoulli kernels is the subject of present note and this approach seems to be new.

\section{Favard constants}
Constants $K_r$ in (\ref{intr_fav_best}) usually called the \textit{Favard constants} (see e.g. \cite{devore} , p.149)   and from (\ref{intr_fav_const}) $K_r=\frac{4}{\pi}S(r+1)$, where
$$
S(r)=\sum_{k\in 4\mathbb{Z}+1} \frac{1}{k^{r}}=1+\frac{(-1)^r}{3^r}+\frac{1}{5^r}+\frac{(-1)^r}{7^r}+\frac{1}{9^r}+\frac{(-1)^r}{11^r} + \cdots 
$$
Hence
\begin{equation}
\label{intr_fav_osc}
K_2<K_4<\cdots < \frac{4}{\pi} < \cdots K_3<K_1
\end{equation}

We also claim that  for odd $r$
\begin{equation}
\label{intr_fav_const_odd}
K_{r}=\frac{4}{\pi}\sum_{k=0}^\infty  \frac{1}{(2k+1)^{r+1}}
\end{equation}
and for even $r$
\begin{equation}
\label{intr_fav_const_even}
K_{r}=\frac{4}{\pi}\sum_{k=0}^\infty  \frac{(-1)^k}{(2k+1)^{r+1}}
\end{equation}
If we (following \cite{el}) compose the generating function for $K_r$ (where we define for convenience  $K_0=1$)
\begin{equation}
\label{intr_bern_gen}
K(z)=\sum_{r=0}^\infty K_rz^r=\sum_{r=0}^\infty\frac{4}{\pi} S(r+1)z^r=\frac{4}{\pi }\sum_{r=0}^\infty \sum_{m\in 4\mathbb{Z}+1}\frac{z^r}{m^{r+1}}  
\end{equation}
and then change order of sums, which is possible for $|z|<1$ we get
\begin{equation}
\label{intr_sum}
K(z)=\frac{4}{\pi }\sum_{m\in 4\mathbb{Z}+1}\frac{1}{m-z}=\frac{4}{\pi }(\frac{1}{1-z}-\frac{1}{3+z}+ \frac{1}{5-z}-\frac{1}{7+z}+\frac{1}{9-z}-\dots
\end{equation}
Thus, according to the well-known formulas  for $\tan z$ and $\sec z$ (see e. g. \cite{aar} formulas (1.2.7) and (1.2.8))
\begin{equation}
\label{fav_const_tan}
\frac{\pi}{2}\tan\frac{\pi  z}{2}=\frac{1}{1-z}- \frac{1}{1+z}+ \frac{1}{3-z}-\frac{1}{3+z}+ \frac{1}{5-z}-\frac{1}{5+z}+\dots 
\end{equation}
\begin{equation}
\label{fav_const_sec}
\frac{\pi}{2}\sec\frac{\pi  z}{2}=\frac{1}{1-z}+\frac{1}{1+z}- \frac{1}{3-z}-\frac{1}{3+z}+ \frac{1}{5-z}+\frac{1}{5+z}-\dots
\end{equation}
we have
\begin{equation}
\label{intr_final_formula_tan_cosec}
K(z)=\sum_{r=0}^\infty K_rz^r=\sum_{r=0}^\infty K_{2r+1}z^{2r+1}+\sum_{r=0}^\infty K_{2r} z^{2r}=\tan \frac{\pi z}{2}+\sec \frac{\pi z}{2}
\end{equation}
 Develop right side in Taylor series  near $z=0$ (\cite{as}, p. 75)
\begin{equation}
\label{intr_sec}
\sec z=1+ \frac{z^2}{2}+\frac{5z^4}{24}+\frac{61z^6}{720} +\dots = \sum_{n=0}^\infty  \frac{(-1)^n E_{2n}}{(2n)!}z^{2n}
\end{equation}
and 
\begin{equation}
\label{intr_tan}
\tan z= z+ \frac{z^3}{3}+\frac{2z^5}{15}+\frac{17z^7}{315}+ \dots=\sum_{n=1}^\infty  \frac{(-1)^{n-1}2^{2n}(2^{2n}-1)B_{2n}}{(2n)!}z^{2n-1}
\end{equation}
where $E_n$ and $B_n$ are Euler and Bernoulli  numbers respectively (\cite{as}, p. 804)
\begin{equation}
\label{intr_eul_num}
E_0= 1, ~~~ E_2=-1,~~~ E_4=5, ~~~
\end{equation}
\begin{equation}
\label{intr_eul_num}
B_0= 1, ~~~ B_1=-\frac{1}{2},~~~ B_2=\frac{1}{6},~~~B_4=-\frac{1}{30}
\end{equation}
Now comparing coefficients near $z^n$ in both sides of ({\ref{intr_final_formula_tan_cosec}) we can obtain exact values of $K_r$ 
\begin{equation}
\label{intr_first_favar}
K_1=\frac{\pi}{2},~~~ K_2=\frac{\pi^2}{8},~~~K_3=\frac{\pi^3}{24}, ~~~K_4=\frac{5\pi^4}{384},~~~K_5=\frac{\pi^5}{240},~~~K_6=\frac{61\pi^6}{46080},\dots 
\end{equation} 
Since coefficients in Taylor expansion of functions $\sec z$ and $\tan z$ near $z=0$ has only rational numbers, we can conclude that  all $K_r$ are $\pi^r$-rational.

\section{Best approximation of Steklov  kernels}

To explain the  method  we start with some well known result of the best approximation in $L^1$ metric (see \cite{kry} ). The only advantage of our approach  is that it is a little bit shorter. 

Define
$$
\chi_h(\theta)=h^{-1}~~~\text{for}~~~\vert \theta \vert \leq \pi h~~~ \text{and} ~~~0 ~~~\text{otherwise on}~~~\mathbb{T}
$$
and
$$
\chi^m_h(\theta)=(\chi_h\ast\chi_h\ast \dots \chi_h) (\theta)~~~ (m ~~~\text{times})
$$

Then it is easy to see that
\begin{equation}
\label{chi}
\chi^m_h(\theta)=1+(2\pi h )^{-m}\sum_{p=0}^m(-1)^{m-p} \binom{m} p  \mathcal{B}_m(\theta+(2p-m)\pi h)
\end{equation}
or
\begin{equation}
\label{chi_1}
\chi^m_h(\theta)=1+(2\pi h)^{-m}\Delta_{\pi h}^m \mathcal{B}_m(\theta)
\end{equation}
where $\Delta_h^m$ is the central difference.

In order to prove  (\ref{chi}) we claim that
\begin{equation}
\label{chi_f}
\widehat{\chi_h}(k)=\frac{\sin \pi hk}{\pi h k}
\end{equation}
and hence
\begin{equation}
\label{chi_f_m}
\widehat{\chi_h^m}(k)=\left(\frac{\sin \pi hk}{\pi h k}\right)^m=(2\pi h)^{-m} (e^{i\pi h k }- e^ {-i \pi h k})^m\widehat{\mathcal{B}_m}(k) 
\end{equation}
for $k\neq 0$. Hence for $k\neq 0$ the Fourier coefficients of the left side and of the right side of (\ref{chi}) are equal.

But for $k=0$ obviously $\widehat{\chi_h^m}(0)=1$ and for the  sum $\Sigma$  with Bernoulli kernel in the right side of (\ref{chi}) we always have  $\widehat{\Sigma}(0)=0$, since $\widehat{\mathcal{B}_m}(0)=0$.

Thus for all $k\in \mathbb{Z}$ Fourier coefficients of left and right sides of (\ref{chi}) coincides and hence the functions are equal.

The representation (\ref{chi})   and Favard identity (\ref{intr_fav_best}) immediately implies
\begin{equation}
\label{best_chi}
E_{n-1}(\chi_h^m)_{L^1}\leq \frac{K_m}{(\pi h n)^m}
\end{equation}
which is \textit{a good,} but not  \textit{the best} estimate for \textit{all combinations }of $h$ and $n$.

Meanwhile, for some combinations of $n$ and $h$ it is the best one and to show this we use duality.

The fundamental E. Helly (or Hahn-Banach) Theorem
\begin{equation}
\label{intr_dual_helly}
(X  /Y)^*\cong Y^\bot
\end{equation}
 implies that
\begin{equation}
\label{intr_dual_trig}
E_{n-1}(f)_{L^1}=\sup_{\sigma\in T^\bot_{n-1}, \|\sigma\|_\infty=1} \int_0^{2\pi} f (\varphi)\overline{ \sigma(\varphi)}d\varphi/ 2\pi 
\end{equation}

Here $T^\bot_{n-1}$ is ''annihilator space'' for $T_{n-1}$. i.e. space of functionals, which equal zero on each element from $T_{n-1}$, i. e.  they annihilate  polynomials.

Or in other words, if $\langle\cdot , \sigma \rangle$ is a functional  on the space $L^1$ generated by $\sigma\in L^\infty$, then  
$$
T^\bot_{n-1} =\lbrace \langle\cdot , \sigma \rangle \in (L^1)^* : \ker \langle\cdot , \sigma \rangle =T_{n-1}\rbrace
$$

Elementary Fourier analysis tell us that $\sigma\in T^\bot_{n-1}$  if and only if $\hat{\sigma}(k)=0$ for all $|k|\leq n-1$. Hence
\begin{equation}
\label{intr_dual_ber}
E_{n-1}(\chi_h^m)_{L^1}=\sup_{\sigma\in T^\bot_{n-1}, \|\sigma\|_\infty=1} \int_0^{2\pi}\chi_h^m(\varphi)\overline{ \sigma(\varphi)}d\varphi/ 2\pi 
\end{equation}
and from representation (\ref{chi})
\begin{equation}
\label{intr_dual_ber_TT}
E_{n-1}(\chi_h^m)_{L^1}=
\end{equation}
$$
(2\pi h)^{-m}\sup_{\sigma\in T^\bot_{n-1}, \|\sigma\|_\infty=1}    \sum_{p=0}^m(-1)^{m-p} \binom{m} p \int_0^{2\pi}          \mathcal{B}_m(\varphi)\overline{ \sigma(\varphi-(2p-m)\pi h)}d\varphi/ 2\pi 
$$
Define
\begin{equation}
\label{intr_dual_sin}
s_n(\varphi)=s(n\varphi)=\text{sgn} \sin (n \varphi)=\frac{2}{\pi }\sum_{k\in 2\mathbb{Z}+1} \frac{e^{ikn\varphi}}{ik}
\end{equation}
and
\begin{equation}
\label{intr_dual_cos}
c_n(\varphi)=c(n\varphi)=\text{sgn} \cos (n \varphi)= \text{sgn} \sin (n \varphi +\pi/2)=\frac{2}{\pi }\sum_{k\in 2\mathbb{Z}+1} (-1)^{\frac{k-1}{2}}\frac{e^{ikn\varphi}}{k}
\end{equation}

Then obviously $c_n( \varphi) \in T^\bot_{n-1}$ and has a unit $L^\infty$ norm. Remark, that since $\chi_h^m$ is an even function for all $m$,  we have to choose even $\sigma(\varphi)$ in (\ref{intr_dual_ber_TT})  to get estimate from below.

Let from now $m$ be an \textit{odd} number, let say $m=2r+1$ and $h=\frac{2k-1}{2n}$ for $k=1,2,\dots n$. Note, that for these values of $h$
$$
c_n(\varphi-(2p-m)\pi h)=(-1)^{p-r-k}\text{sign}\cos(n\varphi-\pi/2 )=(-1)^{p-r-k}s(n\varphi)
$$

Define $\sigma(\varphi)=(-1)^k c_n(\varphi )$.  Then from (\ref{intr_dual_ber_TT})
$$
E_{n-1}(\chi_h^m)_{L^1}\geq (-1)^k (2\pi h)^{-m}  \sum_{p=0}^m(-1)^{m-p+p-r-k} \binom{m} p \int_0^{2\pi}          \mathcal{B}_m(\varphi)\overline{s(n\varphi)}d\varphi/ 2\pi =
$$
$$
(-1)^{r}(\pi h)^{-m}\int_0^{2\pi}          \mathcal{B}_m(\varphi)\overline{s(n\varphi)}d\varphi/ 2\pi =(-1)^{r}(\pi h)^{-m}\frac{2i}{\pi }\sum_{k\in 2\mathbb{Z}+1} \frac{\widehat{\mathcal{B}_m}(kn)}{k}=
$$
$$
(-1)^{r}(\pi hn)^{-m}\frac{2i}{\pi }\sum_{k\in 2\mathbb{Z}+1} \frac{1}{k(ik)^{2r+1}}=(-1)^r(\pi hn)^{-m}\frac{2}{\pi }\sum_{k\in 2\mathbb{Z}+1} \frac{(-1)^r}{k^{2r+2}}=
$$
$$
(\pi hn)^{-m}\frac{4}{\pi }\sum_{k=0}^\infty \frac{1}{(2k+1)^{2r+2}}=\frac{K_m}{(\pi h n)^m}
$$
which is an opposite inequality to (\ref{best_chi}) and hence we have an equality for odd $m$ and  $h\in \mathcal{I}_n$, where
$$
\mathcal{I}_n=\left\lbrace  \frac{1}{2n}, \frac{3}{2n}, \frac{5}{2n}, \dots \frac{2n-1}{2n}\right\rbrace
$$

Let  now $m$ be an \textit{even} number, let say $m=2r$ and again $h\in \mathcal{I}_n$. Note, that for even $m$
$$
c_n(\varphi-(2p-m)\pi h)=(-1)^{p-r}\text{sign}\cos(n\varphi)=(-1)^{p-r}c(n\varphi)
$$

Hence, if we take $\sigma(\varphi)=c_n(\varphi)$ in (\ref{intr_dual_ber_TT}) we get  
$$
E_{n-1}(\chi_h^m)_{L^1}\geq (2\pi h)^{-m}  \sum_{p=0}^m(-1)^{m-p} \binom{m} p \int_0^{2\pi}          \mathcal{B}_m(\varphi)(-1)^{p-r}\overline{c(n\varphi)}d\varphi/ 2\pi =
$$
$$
(-1)^r(\pi h)^{-m}\int_0^{2\pi}          \mathcal{B}_m(\varphi)\overline{c(n\varphi)}d\varphi/ 2\pi =(-1)^r(\pi h)^{-m}\frac{2}{\pi }\sum_{k\in 2\mathbb{Z}+1} (-1)^{\frac{k-1}{2} }\frac{\widehat{\mathcal{B}_m}(kn)}{k}=
$$
$$
(\pi hn)^{-m}\frac{2}{\pi }\sum_{k\in 2\mathbb{Z}+1} \frac{(-1)^{\frac{k-1}{2}}}{k^{2r+1}}=(\pi hn)^{-m}\frac{4}{\pi}\sum_{k=0}^\infty  \frac{(-1)^k}{(2k+1)^{2r+1}}=\frac{K_m}{(\pi h n)^m}
$$
where we use  (\ref{intr_fav_const_even}), since  $m=2r$.    This is an opposite inequality to (\ref{intr_fav_best}) and hence we have an equality for even $m$ and  $h\in \mathcal{I}_n$.

We claim also that  $\chi_h^m(\varphi)=0$ for $|\varphi|>\pi mh$. Hence if $\pi/2n\geq \pi mh$  we can take    $\sigma(\varphi)=c_n(\varphi)$     to obtain
$$
\int_0^{2\pi}\chi_h^m(\varphi)\overline{ \sigma(\varphi)}d\varphi/ 2\pi=\int_0^{2\pi}\chi_h^m(\varphi)d\varphi/ 2\pi=1
$$
This observation implies
\begin{equation}
\label{best_chi_1}
E_{n-1}(\chi_h^m)_{L^1}= 1
\end{equation}
for all $h$ such that $|h|\leq \frac{1}{2mn}$.

If we compare (\ref{best_chi}) with (\ref{best_chi_1}) for $m=1$ we see that right side of  (\ref{best_chi}) equal $1$ for $h=1/2n$.

For $m\geq 2$ and $h=\frac{1}{2mn}$ right side of  (\ref{best_chi}) equal $(2m/ \pi)^m K_m > 1$ which indicate that our estimates are rude for these values of $h$.

Let we summarize all our observations together. Let numbers $n$ and $m$ are fixed. Then
\begin{equation}
\label{best_chi_first}
\text{for} ~~~ 0< h \leq\frac{1}{2mn} ~~~ \text{we have}~~~  E_{n-1}(\chi_h^m)_{L^1}= 1
\end{equation}
\begin{equation}
\label{best_chi_all}
\text{for} ~~~ \frac{1}{2mn} \leq  h \leq 1~~~ \text{we have}~~~  E_{n-1}(\chi_h^m)_{L^1}\leq \frac{K_m}{(\pi h n)^m}
\end{equation}
\begin{equation}
\label{best_chi_odd} 
\text{for} ~~~~h\in \left\lbrace  \frac{1}{2n}, \frac{3}{2n}, \frac{5}{2n}, \dots \frac{2n-1}{2n}\right\rbrace~~~ \text{we have}~~~  E_{n-1}(\chi_h^m)_{L^1}= \frac{K_m}{(\pi h n)^m}
\end{equation}

\section{Best approximation of quasi-Bernoulli kernels}  
Our second example are kernels $\mathcal{K}_1(\theta)=\mathcal{B}_1(\theta)\cos\theta$ and $\mathcal{K}_2(\theta)=\mathcal{B}_1(\theta)\sin\theta$, which play an important role in questions of  approximation of smooth functions by \textit{algebraic } polynomials on the segment $[-1,1]$. This is the subject of the last section and actually the starting point of this note. We call kernels $\mathcal{K}$ as well as more general kernels,  the \textit{quasi-Bernoulli }kernels since obvious analogy with quasi-polynomials.

\begin{theorem}
\label{theo_quasi_bern} Let $\mathcal{K}_1(\theta)=\mathcal{B}_1(\theta)\cos\theta$ and $\mathcal{K}_2(\theta)=\mathcal{B}_1(\theta)\sin\theta$. Then for all natural $n\geq 2$
\begin{equation}
\label{quasi_ber_ba_1}
E_{n-1} (\mathcal{K}_1)_{L^1}= \tan\frac{\pi}{2n}
\end{equation}
\begin{equation}
\label{quasi_ber_ba_2}
E_{n-1} (\mathcal{K}_2)_{L^1}= \sec\frac{\pi}{2n}-1
\end{equation}
\end{theorem}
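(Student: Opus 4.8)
The plan is to trap $E_{n-1}(\mathcal{K}_1)_{L^1}$ (and likewise $E_{n-1}(\mathcal{K}_2)_{L^1}$) between a lower bound coming from the duality relation (\ref{intr_dual_trig}) and an upper bound coming from decomposing the quasi-Bernoulli kernel into a series of ordinary Bernoulli kernels and invoking Favard's identity (\ref{intr_fav_best}); the whole point will be that these two bounds coincide.

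First I would compute the Fourier coefficients of $\mathcal{K}_1$. Writing $\cos\theta=\tfrac12(e^{i\theta}+e^{-i\theta})$ and using $\widehat{\mathcal{B}_1}(k)=1/(ik)$, one gets $\widehat{\mathcal{K}_1}(m)=\frac{m}{i(m^2-1)}$ for $|m|\ge 2$, with only finitely many exceptional values at $m=0,\pm1$. The key algebraic step is the geometric expansion
\[
\frac{m}{m^2-1}=\sum_{j=0}^\infty \frac{1}{m^{2j+1}},\qquad |m|>1,
\]
which, matched against $\widehat{\mathcal{B}_{2j+1}}(m)=(im)^{-(2j+1)}$, yields $\widehat{\mathcal{K}_1}(m)=\sum_{j\ge0}(-1)^j\widehat{\mathcal{B}_{2j+1}}(m)$ for \emph{all} $|m|\ge2$. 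The crucial observation is that this matching fails only at $|m|\le1$, so
\[
\mathcal{K}_1=\sum_{j=0}^\infty(-1)^j\mathcal{B}_{2j+1}+p,\qquad p\in T_1\subseteq T_{n-1},
\]
and since $n\ge2$ the correcting trigonometric polynomial $p$ does not affect $E_{n-1}$. The analogous computation for $\mathcal{K}_2=\mathcal{B}_1\sin\theta$ gives $\widehat{\mathcal{K}_2}(m)=-\frac{1}{m^2-1}=-\sum_{j\ge0}m^{-(2j+2)}$ for $|m|\ge2$, hence $\mathcal{K}_2=\sum_{j\ge0}(-1)^j\mathcal{B}_{2j+2}$ modulo $T_1$.

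For the upper bound I would use subadditivity of $E_{n-1}(\cdot)_{L^1}$ together with Favard's identity $E_{n-1}(\mathcal{B}_r)_{L^1}=K_r/n^r$, obtaining $E_{n-1}(\mathcal{K}_1)_{L^1}\le\sum_{j\ge0}K_{2j+1}/n^{2j+1}$ and $E_{n-1}(\mathcal{K}_2)_{L^1}\le\sum_{j\ge0}K_{2j+2}/n^{2j+2}$; these series converge geometrically for $n\ge2$ because the $K_r$ are bounded. Summing them through the generating function (\ref{intr_final_formula_tan_cosec}), $K(z)=\tan\frac{\pi z}{2}+\sec\frac{\pi z}{2}$, at $z=1/n$ (the odd part gives $\tan\frac{\pi}{2n}$, the even part minus $K_0=1$ gives $\sec\frac{\pi}{2n}-1$) produces exactly the right-hand sides of (\ref{quasi_ber_ba_1}) and (\ref{quasi_ber_ba_2}).

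For the matching lower bound I would test (\ref{intr_dual_trig}) against $\sigma=s_n$ for the odd kernel $\mathcal{K}_1$ and against $\sigma=c_n$ for the even kernel $\mathcal{K}_2$; both lie in $T_{n-1}^\bot$ with unit $L^\infty$ norm. Pairing the decomposition term by term, using the Fourier coefficients of $s_n$, $c_n$ from (\ref{intr_dual_sin})--(\ref{intr_dual_cos}), one finds $\int_0^{2\pi}\mathcal{B}_{2j+1}\overline{s_n}\,d\varphi/2\pi=(-1)^j K_{2j+1}/n^{2j+1}$ by (\ref{intr_fav_const_odd}), and the companion identity for the even kernels by (\ref{intr_fav_const_even}). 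The signs $(-1)^j$ built into the decomposition cancel these exactly, so the pairing collapses to $\sum_j K_{2j+1}/n^{2j+1}=\tan\frac{\pi}{2n}$, meeting the upper bound. \textbf{The main obstacle} is precisely this sign alignment: a priori the triangle inequality for $E_{n-1}$ is only an estimate, and the real content of the theorem is that it is \emph{sharp}. This happens because every Bernoulli kernel in the decomposition shares the \emph{same} extremal signature $s_n$ (respectively $c_n$) from Favard's theorem, up to the very sign $(-1)^j$ supplied by the decomposition; the duality computation is what certifies the alignment and thereby upgrades the good estimate to the best one. The only remaining care is to justify interchanging the order of summation in the double series and the $L^1$ convergence of $\sum(-1)^j\mathcal{B}_{2j+1}$, both routine for $n\ge2$.
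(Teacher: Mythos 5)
Your overall architecture is exactly the paper's (decompose the quasi-Bernoulli kernel into a series of Bernoulli kernels, get the upper bound from Favard's identity (\ref{intr_fav_best}) summed through the generating function (\ref{intr_final_formula_tan_cosec}), and the matching lower bound from duality against $s_n$, $c_n$), but your central decomposition is false as written, and the point you dismiss as ``routine'' at the end is precisely where it breaks. The series $\sum_{j\ge 0}(-1)^j\mathcal{B}_{2j+1}$ diverges: the $k=\pm 1$ harmonics give $(-1)^j\mathcal{B}_{2j+1}(\theta)=2\sin\theta+O(2^{-2j})$, so the general term tends to $2\sin\theta$, not to $0$ --- there is no convergence in $L^1$, pointwise, or even of the first Fourier coefficient, since $\sum_j(-1)^j\widehat{\mathcal{B}_{2j+1}}(1)=\sum_j \frac{1}{i}$ diverges. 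This is the flip side of the fact that your geometric expansion of $m/(m^2-1)$ holds only for $|m|\ge 2$: the failure at $m=\pm 1$ is a divergent series, not a quantity that a fixed polynomial $p\in T_1$ can absorb, so ``$\mathcal{K}_1=\sum(-1)^j\mathcal{B}_{2j+1}+p$'' is meaningless. The same defect occurs for $\mathcal{K}_2$, where $(-1)^j\mathcal{B}_{2j+2}(\theta)\to -2\cos\theta$. The paper's formulas (\ref{intr_golden_formula_1})--(\ref{intr_golden_formula_2}) repair exactly this by a per-term renormalization, $\mathcal{K}_1=\frac{1}{2}\sin\theta+\sum_r\bigl((-1)^r\mathcal{B}_{2r+1}(\theta)-2\sin\theta\bigr)$ and $\mathcal{K}_2=1+\frac{1}{2}\cos\theta+\sum_{r\ge1}\bigl((-1)^{r+1}\mathcal{B}_{2r}(\theta)+2\cos\theta\bigr)$, each bracket being $O(2^{-2r})$ uniformly, so that the series converge absolutely; this renormalization is the actual content of the ``unknown formulas'' the proof rests on, not a technicality.

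Once the decomposition is corrected, the rest of your argument survives essentially verbatim, because for $n\ge 2$ the correction terms $2\sin\theta$, $2\cos\theta$ lie in $T_{n-1}$: in the upper bound each bracket has $E_{n-1}$ equal to $K_r/n^r$, and your summation via $K(z)$ at $z=1/n$ gives $\tan\frac{\pi}{2n}$ and $\sec\frac{\pi}{2n}-1$ correctly; in the lower bound the corrections are annihilated by any $\sigma\in T_{n-1}^\perp$, so your term-by-term pairing collapses as you claim (this pairing is a mild variant of the paper, which instead computes $e_n(\mathcal{K}_i)$ directly from $\widehat{\mathcal{K}_i}$ and sums by the partial-fraction expansions (\ref{fav_const_tan})--(\ref{fav_const_sec}); both routes are legitimate once absolute convergence is in place). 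One sign must also be fixed: since $\widehat{\mathcal{B}_{2j+2}}(kn)=(-1)^{j+1}(kn)^{-(2j+2)}$, pairing your even decomposition with $c_n$ yields $-\sum_j K_{2j+2}/n^{2j+2}$, so for $\mathcal{K}_2$ you must test against $-c_n$, as the paper does in (\ref{kry_2_mat}) --- harmless, since the dual supremum admits $\pm\sigma$, but as stated your even-case pairing proves a lower bound for the wrong sign.
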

\begin{proof}

To find the best approximation of these kernels by trigonometric polynomials we use formulas which seems unknown before --
\begin{equation}
\label{intr_golden_formula_1}
\mathcal{K}_1(\theta)=\frac{1}{2}\sin \theta+\sum_{r=0}^\infty((-1)^r\mathcal{B}_{2r+1}(\theta)-2\sin \theta))
\end{equation}
and 
\begin{equation}
\label{intr_golden_formula_2}
\mathcal{K}_2(\theta)=1+\frac{1}{2}\cos \theta + \sum_{r=1}^\infty((-1)^{r+1}\mathcal{B}_{2r}(\theta)+2\cos\theta)
\end{equation}

To prove these formulas one have only to compute  Fourier coefficients of both sides which  is a routine work (see also (\ref{kry_1_coef}), (\ref{kry_1_coef_1}), (\ref{kry_2_coef}), (\ref{kry_2_coef_1}))

Since, obviously for $r\geq 1$ we have  
$|(-1)^r \mathcal{B}_{2r+1}(\theta) -2\sin\theta | \leq 6 \cdot 2^{-(2r+1)}$ and  $|(-1)^{r +1}\mathcal{B}_{2r}(\theta) +2\cos \theta | \leq 6 \cdot 2^{-2r}$, these series converges absolutely and hence uniformly on $\mathbb{T}$.

The corresponding trigonometric polynomials of best approximation of $\mathcal{K}_1$ and  $\mathcal{K}_2$ in $L^1$ metric are
$$
\tau_{n-1}(\mathcal{K}_1)(\theta) =\frac{1}{2}\sin \theta+\sum_{r=0}^\infty((-1)^r\tau_{n-1}(\mathcal{B}_{2r+1})(\theta) -2\sin \theta)
$$
and
$$
\tau_{n-1}(\mathcal{K}_2)(\theta) =1+\frac{1}{2}\cos \theta+\sum_{r=1}^\infty((-1)^{r+1}\tau_{n-1}(\mathcal{B}_{2r})(\theta) +2\cos  \theta)
$$
where series converges absolutely, since 
$$
|(-1)^r\tau_{n-1}(\mathcal{B}_{2r+1})(\theta) -2\sin \theta|\leq E_{n-1}(\mathcal{B}_{2r+1})_{L^1}+|(-1)^r \mathcal{B}_{2r+1}(\theta) -2\sin\theta | \leq
$$
$$
\frac{K_{2r+1}}{n^{2r+1}}+ 6 \cdot 2^{-(2r+1)}
$$
and 
$$
|(-1)^{r+1}\tau_{n-1}(\mathcal{B}_{2r})(\theta) +2\cos \theta|\leq E_{n-1}(\mathcal{B}_{2r})_{L^1}+|(-1)^{r+1} \mathcal{B}_{2r}(\theta) +2\cos\theta | \leq
$$
$$
\frac{K_{2r}}{n^{2r}}+ 6 \cdot 2^{-2r}
$$

Now we immediately get the desired upper estimates for $E_{n-1}(\mathcal{K} )$  from  the Favard identities (\ref{intr_fav_best}) and formula (\ref{intr_final_formula_tan_cosec}).

In order to prove opposite inequality we again use duality. Define
\begin{equation}
e_n(\mathcal{K}_1)=\frac{1}{2\pi}\int_0^{2\pi} \mathcal{K}_1(\varphi)\text{sign}\sin n\varphi d\varphi  
\end{equation}
and
\begin{equation}
\label{kry_2_mat}
e_n(\mathcal{K}_2)=\frac{1}{2\pi}\int_0^{2\pi} \mathcal{K}_2(\varphi)(-\text{sign}\cos n\varphi) d\varphi 
\end{equation}

Then by the duality arguments we have $E_{n-1}(\mathcal{K} ) \geq e_n(\mathcal{K})$.

For $n\geq 2$ in the same manner as in the preceding section, we obtain
$$
e_n(\mathcal{K}_1)=\frac{2}{\pi}\sum_{k\in 2\mathbb{Z}+1} \frac{\widehat{\mathcal{K}_1}(-kn)}{ik}=\frac{2}{\pi}\sum_{k\in 2\mathbb{Z}+1} \frac{-kn}{ik((kn)^2-1)i}
=
$$
$$
\frac{2}{\pi}\sum_{k\in 2\mathbb{Z}+1} \frac{n}{((kn)^2-1)}=\frac{4}{\pi n}\sum_{k=0}^\infty  \frac{1}{((2k+1)^2-n^{-2})}
$$
In order to calculate the last sum we can use (\ref{fav_const_tan})

\begin{equation}
\label{kry_1_formula}
\sum_{k=0}^\infty \frac{1}{(2k+1)^2-z^2}=\frac{\pi}{4z}\tan\frac{\pi z}{2}
\end{equation}
Hence
\begin{equation}
\label{kry_1_finito}
e_n(\mathcal{K}_1)=\tan\frac{\pi}{2n}
\end{equation}
Remark, that
\begin{equation}
\label{kry_1_asympt}
e_n(\mathcal{K}_1)\asymp \frac{\pi}{2n}=\frac{K_1}{n} ~~~ \text{for} ~~~~~~ n\to \infty
\end{equation}
and 
\begin{equation}
\label{kry_2_asympt_l}
e_n(\mathcal{K}_1) >\frac{\pi}{2n} +\frac{\pi^3}{24n^3}=\frac{K_1}{n}+\frac{\pi^3}{24n^3}
\end{equation} 
where $K_1$ is the first Favard constant (see e.g. \cite{as}, p. 75).

Analogously for $n\geq 2$  
\begin{equation}
\label{kry_2_mat_calc}
e_n(\mathcal{K}_2)=\frac{2}{\pi}\sum_{k\in 2\mathbb{Z}+1}(-1)^\frac{k+1}{2} \frac{\widehat{\mathcal{K}_2}(-kn)}{k}=\frac{2}{\pi }\sum_{k\in 2\mathbb{Z}+1}(-1)^\frac{k-1}{2}  \frac{n^{-2}}{k(k^2-n^{-2})}
=
\end{equation}
$$
\frac{2}{\pi}\sum_{k\in 2\mathbb{Z}+1} (-1)^\frac{k-1}{2}(\frac{1/2}{k-n^{-1}}+\frac{1/2}{k+n^{-1}}-\frac{1}{k})=
$$
$$
=\frac{2}{\pi}(\frac{1}{1-n^{-1}}+\frac{1}{1+n^{-1}}-\frac{1}{3-n^{-1}}-\frac{1}{3+n^{-1}}+\frac{1}{5-n^{-1}}+\frac{1}{5+n^{-1}}-\dots -2\frac{\pi}{4})
$$
Now we used  (\ref{fav_const_sec}) to evaluate  the last sum and obtain
\begin{equation}
\label{kry_2_finito}
e_n(\mathcal{K}_2)=\sec\frac{\pi}{2n}-1
\end{equation}

Remark, that
\begin{equation}
\label{kry_1_asympt}
e_n(\mathcal{K}_2)\asymp \frac{\pi^2}{8n^2} =\frac{K_2}{n^2}~~~ \text{for} ~~~~~~ n\to \infty
\end{equation}
and 
\begin{equation}
\label{kry_2_asympt_l}
e_n(\mathcal{K}_2) >\frac{\pi^2}{8n^2} +\frac{5\pi^4}{384 n^4 }=\frac{K_2}{n^2}+\frac{5\pi^4}{384 n^4 }
\end{equation} 
where $K_2$ is the second Favard constant (see e.g. \cite{as}, p. 75).
\end{proof}

\section{Bernoulli series}
From previous two sections  we see that both representations has the form
\begin{equation}
\label{repr_gen}
\mathcal{K}=T+\sum_{r=1}^\infty \mathcal{B}_r\ast \mu_r
\end{equation}
with some trivial distribution $T$ ( trigonometric polynomial ) and non-trivial distributions  $\lbrace \mu_r \rbrace$. We remind that distributions is a continues linear functionals over $C^\infty (\mathbb{T)}$. The typical examples are measures (for instance the  Dirac measure $\langle f, \delta_\theta \rangle=f(\theta)$ with  $\widehat{\delta}(k)= e^{-ik\theta}$), derivative $\mu_r=\partial^r$ with $\widehat{\mu_r}=(ik)^r$ or absolutely continues measures $\mu_r=g_r(\theta)d\theta$.  See ( \cite{ed}  p. 52) for definitions and explanations.

Indeed, representation (\ref{chi}) has the form (\ref{repr_gen})  with $T_0(\theta)=1$ and   all $\mu_r$ equal $0$ except $r=m$, where
$$
\mu_m=(2 \pi h )^{-m} (\delta_{\pi h} -\delta_{-\pi h})^m
$$

On the other hand representation (\ref{intr_golden_formula_1})  has the form (\ref{repr_gen}) with $T(\theta)=\frac{1}{2}\sin \theta$ and $\mu_{2r}=0$ and $\mu_{2r+1}=(-1)^r\delta_0 -2\sin \theta d\theta$.

We remark that if we choose $\mu_r=\partial^r$ and $\mu_k=0$ for $k\neq r$ then the representation (\ref{repr_gen}) (as a functional over space $W^r$ ) is the familiar Euler-Maclauren formula (in the periodic case) with $\mathcal{K}=\delta_0$ and $\langle f, T\rangle =\widehat{f}(0)$. 

We also claim,  that any function $\mathcal{K}\in L^1$ has representation (\ref{repr_gen}) if we choose $\mu_r(\theta)=\widehat{K}(-r)(-ir)^re^{-ir\theta}+\widehat{K}(r)(ir)^re^{ir\theta}$ and $T(\theta)=\widehat{\mathcal{K}}(0)$, but we mainly interested in such representations where distributions are  measures with non trivial discrete part.
In this section we show that expansion (\ref{repr_gen}) with  measures with non trivial discrete part hold for quite general class of functions.

Next proposition in this direction corresponds to the  expansion (\ref{repr_gen}) with $\mu_r=c_r(\delta-D_N(\theta)d\theta)$, where $\delta$ is the Dirac measure and $D_N (\theta)$ is the Dirichlet kernel.
\begin{lemma}
\label{gen_g}
Let $\mathcal{K}(\theta)\in L^1(\mathbb{T})$ and there exist natural number $N$ such that for $k\in \mathbb{Z}$ and $|k|\geq N+1$ 
\begin{equation}
\label{gen_g_f}
\widehat{\mathcal{K}}(k)=g(k)
\end{equation}
where $g(z)$ is an analytic function in $\lbrace z: ~~|z|>N\rbrace$ and has a zero at  infinity. 

Then
 \begin{equation}
\label{gen_g_b}
\mathcal{K}(\theta)=T_N(\theta) + \sum_{m=1}^\infty c_m( \mathcal{B}_m(\theta)-S_N(\mathcal{B}_m,\theta))
\end{equation}
where
 \begin{equation}
\label{gen_g_co}
c_m=\frac{1}{2\pi i}\int_{C} (i\zeta)^mg(\zeta)\frac{d\zeta}{\zeta}
\end{equation}
and integral is over  some  contour $C$ around closed  disc $\lbrace z : |z|\leq N\rbrace$.

  Here $S_N(\mathcal{B}_m ,\theta)$ is the $N$-th partial sum of Fourier series of the Bernoulli kernel. i. e.
  $$
  S_N(\mathcal{B}_{2r+1} ,\theta)=(-1)^{r}2\sum_{k=1}^N \frac{\sin k \theta}{k^{2r+1}} 
  $$
  $$
  S_N(\mathcal{B}_{2r} ,\theta)=(-1)^{r}2\sum_{k=1}^N \frac{\cos k \theta}{k^{2r}}
  $$
  and 
  $$
  T_N(\theta)=\sum_{|k|\leq N} \widehat{\mathcal{K}}(k)e^{ik\theta}
  $$
\end{lemma}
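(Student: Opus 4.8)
The plan is to prove (\ref{gen_g_b}) by showing that both sides have identical Fourier coefficients, which for $L^1$ functions forces them to agree. The whole point is that $\mathcal{B}_m - S_N(\mathcal{B}_m)$ is the high-frequency tail of the $m$-th Bernoulli kernel: since $\widehat{\mathcal{B}_m}(k) = (ik)^{-m}$ for $k \neq 0$ and $S_N(\mathcal{B}_m)$ is its $N$-th partial sum, the difference has $\widehat{(\mathcal{B}_m - S_N(\mathcal{B}_m))}(k) = 0$ for $|k| \le N$ and $= (ik)^{-m}$ for $|k| \ge N+1$. Hence on the low range $|k| \le N$ only the polynomial $T_N$ contributes, and by its definition it reproduces $\widehat{\mathcal{K}}(k)$ exactly, while the Bernoulli series contributes nothing there.

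First I would treat the range $|k| \ge N+1$, where $T_N$ is silent and the right-hand side carries Fourier coefficient $\sum_{m \ge 1} c_m (ik)^{-m}$. The key is to identify this sum with $g(k)$. Because $g$ is analytic in $\{|z| > N\}$ and vanishes at infinity, it admits a Laurent expansion at infinity, $g(z) = \sum_{m \ge 1} a_m z^{-m}$, valid for $|z| > N$, with coefficients $a_m = \frac{1}{2\pi i}\int_C \zeta^{m-1} g(\zeta)\, d\zeta$ for any contour $C$ encircling $\{|z| \le N\}$. Comparing this with (\ref{gen_g_co}) gives the clean relation $c_m = i^m a_m$, so that $c_m (ik)^{-m} = a_m k^{-m}$ and $\sum_{m \ge 1} c_m (ik)^{-m} = g(k)$ upon evaluating the Laurent series at $z = k$ (legitimate since $|k| \ge N+1 > N$). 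By hypothesis (\ref{gen_g_f}) this equals $\widehat{\mathcal{K}}(k)$, completing the coefficient match on the high range.

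The hard part will be not the formal matching but the analytic justification: one must know that the series in (\ref{gen_g_b}) genuinely converges, so that the term-by-term extraction of the $k$-th Fourier coefficient above is valid. This is exactly where the gap between the analyticity radius $N$ and the first surviving frequency $N+1$ enters. I would bound the coefficients by deforming $C$ to a circle $|\zeta| = R$ with $N < R < N+1$, giving $|c_m| = |a_m| \le R^m \max_{|\zeta|=R}|g(\zeta)|$, while the tail of each Bernoulli block obeys $\|\mathcal{B}_m - S_N(\mathcal{B}_m)\|_\infty \lesssim (N+1)^{-m}$ for $m \ge 2$ because the lowest retained frequency is $N+1$. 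The product then decays geometrically like $(R/(N+1))^m$, which yields absolute and uniform convergence of the tail $\sum_{m \ge 2}$; the single term $m = 1$ is a fixed function of $L^1$ (its absolute $L^\infty$ bound fails, but $\|\mathcal{B}_1 - S_N(\mathcal{B}_1)\|_2 \lesssim N^{-1/2}$ is finite) and is simply added on. This convergence licenses the interchange of summation with the Fourier-coefficient functional used above, whence both sides of (\ref{gen_g_b}) share all Fourier coefficients and therefore coincide in $L^1$.
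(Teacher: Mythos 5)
Your proposal is correct and follows essentially the same route as the paper's proof: Laurent expansion of $g$ at infinity identifying the $c_m$, a contour of radius strictly between $N$ and $N+1$ to bound $|c_m|$, sup-norm bounds on the tails $\mathcal{B}_m-S_N(\mathcal{B}_m,\cdot)$ giving absolute convergence, term-by-term extraction of Fourier coefficients, and uniqueness of Fourier coefficients in $L^1$. Your separate treatment of $m=1$ is in fact a small refinement, since the paper's uniform claim $|\mathcal{B}_m(\theta)-S_N(\mathcal{B}_m,\theta)|\le 6(N+1)^{-m}$ is false at $m=1$ (though note the sawtooth tail is uniformly bounded in $L^\infty$ --- only the $(N+1)^{-1}$-smallness fails, so your parenthetical that its $L^\infty$ bound ``fails'' is slightly overstated, but your patch works either way).
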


\begin{proof}
Since $g(z)$ is analytic outside of the closed  disc $\lbrace z: |z|\leq  N \rbrace$ and has a zero at infinity, it can be expanded  in $\lbrace z : |z|> N\rbrace $   into the Laurent series
$$
g(z)=\sum_{m=1}^\infty \frac{c_m}{(iz)^m}
$$
with $c_m$ defined by (\ref{gen_g_co}) (see e.g. \cite{ww}, p. 100). 

Denote $\mathcal{S}(\theta)$ the right side of  (\ref{gen_g_b})  and  claim that $|\mathcal{B}_m(\theta)-S_N(\mathcal{B}_m, \theta)|\leq 6\cdot (N+1)^{-m}$ on $\mathbb{T}$ and from (\ref{gen_g_co})  follows that $|c_m|\leq M (N+0.5)^m$. Hence series (\ref{gen_g_b}) converge absolutely and we can change sum and integral. This implies that for $|k|\leq N$
\begin{equation}
\label{gen_g_proof_k_f_N}
\widehat{\mathcal{S}}(k)=\widehat{T_N}(k)=\widehat{\mathcal{K}}(k)
\end{equation}
and for $|k|> N$
\begin{equation}
\label{gen_g_proof_k_f}
\widehat{\mathcal{S}}(k)=\sum_{m=1}^\infty c_m \widehat{\mathcal{B}_m} (k)=\sum_{m=1}^\infty  \frac{c_m}{(ik)^m}=g(k)=\widehat{\mathcal{K}}(k)
\end{equation}
Thus all Fourier coefficients of $\mathcal{S}(\theta)$ and $\mathcal{K}(\theta)$ coincides and hence they are equal.
\end{proof}

Now we can apply this lemma to kernels, which arises in the questions of approximations of functions with bounded  derivative of high order by algebraic polynomials on the segment $[-1,1]$.

Let 
\begin{equation}
\label{bern_ser_quasi_ber}
\mathcal{K}(\theta)=\mathcal{B}_1 (\theta) t_1(\theta)+\mathcal{B}_2(\theta)  t_2(\theta)+ \dots \mathcal{B}_m(\theta) t_m(\theta)
\end{equation}
where $\mathcal{B}_k(\theta)  $ are Bernoulli kernels  and $t_k(\theta)$ are (complex) trigonometric polynomials. Then we call $\mathcal{K}(\theta)$ the \textit{quasi-Bernoulli} kernel

For example $\mathcal{K}_1(\theta)=\mathcal{B}_1(\theta) \cos \theta$ and $\mathcal{K}_2(\theta)=\mathcal{B}_1(\theta) \sin \theta$ are quasi-Bernoulli kernels. We see that  for $k\neq\pm 1$
\begin{equation}
\label{kry_1_coef}
\widehat{\mathcal{K}}_1(k) =\frac{1}{2}(\mathcal{B}_1(\theta) e^{i\theta}+\mathcal{B}_1(\theta)e^{-i \theta})\hat{}(k)=\frac{1}{2}(\widehat{\mathcal{B}_1}(k-1)+\widehat{\mathcal{B}_1}(k+1))=\frac{k}{(k^2-1) i}
\end{equation}
and
\begin{equation}
\label{kry_1_coef_1}
\widehat{\mathcal{K}}_1(\pm 1) =\pm \frac{1}{4i}, ~~~~~~\widehat{\mathcal{K}}_1(0) =0
\end{equation}

Analogously for $k\neq\pm 1$
\begin{equation}
\label{kry_2_coef}
\widehat{\mathcal{K}}_2(k) =\frac{1}{2i}(\mathcal{B}_1(\theta) e^{i\theta}-\mathcal{B}_1(\theta)e^{-i \theta})\hat{}(k)=\frac{1}{2i}(\widehat{\mathcal{B}_1}(k-1)-\widehat{\mathcal{B}_1}(k+1))=-\frac{1}{(k^2-1) }
\end{equation}
and
\begin{equation}
\label{kry_2_coef_1}
\widehat{\mathcal{K}}_2(\pm 1) = \frac{1}{4}, ~~~~~\widehat{\mathcal{K}}_2(0) =1
\end{equation}

Hence $\widehat{\mathcal{K}}_1(k) =g(k)$ for $|k|>1$ with
$$
g(z)=\frac{z}{i(z^2-1)}
$$
The function $g(z)$ is analytic in $|z|>1$ and obviously have a zero at infinity. Thus we can apply Lemma \ref{gen_g}. To calculate the coefficients we can use the residue theorem (\cite{ww}, p. 112) and get 
 \begin{equation}
\label{gen_g_co_k_1}
c_m=\frac{i^m}{2\pi i}\int_{C} \frac{\zeta^{m}}{i(\zeta^2-1)}d\zeta=\frac{i^{m-1}}{2\pi i}\int_{C} \zeta^{m}\frac{1}{2}( \frac{1}{\zeta -1} -  \frac{1}{\zeta +1} )d\zeta=
\end{equation}
$$
i^{m-1}\frac{1}{2}(1-(-1)^m)
$$
Hence for $r=0, 1, 2, \dots$
 \begin{equation}
\label{gen_g_co_k_1_2r}
c_{2r}=0 ~~~~~\text{and}~~~~~c_{2r+1}=(-1)^{r}
\end{equation}

We see that according to Lemma \ref{gen_g}
$$
\mathcal{K}_1(\theta)=\sum_{|k|\leq 1} \widehat{\mathcal{K}}(k)e^{ik\theta}+\sum_{r=0}^\infty c_{2r+1}( \mathcal{B}_{2r+1}(\theta)-S_1(\mathcal{B}_{2r+1},\theta))=
$$
$$
\frac{1}{2}\sin \theta +\sum_{r=0}^\infty (-1)^r( \mathcal{B}_{2r+1}(\theta)-(-1)^{r}2\sin\theta)
$$
which coincide with (\ref{intr_golden_formula_1})

Analogously we can obtain (\ref{intr_golden_formula_2}). Moreover we can use the same approach to calculate coefficients in representation for any quasi-Bernoulli kernel defined by the formula (\ref{bern_ser_quasi_ber}), since $g(z)$ in this case  is always rational function and has all poles inside of the disc with radius bigger that maximum degree of polynomials $t_m(\theta)$.

\section{Approximation of Lipschitz functions by algebraic polynomials}
\label{_sec_alg_appr}

In \cite{favard_3} J. Favard   posed and partially solved the problem of the best approximation of Lipschitz's functions by algebraic polynomials on the segment $\mathbb{I}=[-1,1]$.

Denote $W^1=W^1(\mathbb{I})$ the space of all Lipschitz functions, or in other words absolutely continues functions with finite norm  $\|f\|_{W^1}=\|f'\|_{L^\infty}$. We prefer use the same symbols for the algebraic case, since  it is clear from the context of the exposition which case we consider. Thus we define
$$
E_m(f)_X=\inf_{p\in P_m} \| f-p\|_X
$$
where $X$ is $L^1(\mathbb{I})$ or $L^\infty(\mathbb{I} ) $ and $P_m=\lbrace p(x) : p(x)=\sum_{k=0}^m p_k x^k \rbrace$ is the space of algebraic polynomials  of the degree less or equal $m$. We also define 
$$
E_m(X)=\sup_{f: \| f\|_X  \leq 1} E_m(f)_X
$$
The result of J. Favard can be expressed in the form
$$
\frac{1}{n}<E_{n-1}(W^1)<\frac{K_1}{n}
$$  
This result was improved in \cite{nik} by S. M. Nikolsky .

\begin{equation}
\label{alg_nik}
 E_{n-1}(W^1)= \frac{K_1}{n}-\epsilon_n
\end{equation}
with
$$
\epsilon_n>0,~~~~~~ \epsilon_n=O(\frac{1}{n\log n})
$$
The exact value (formula) for $E_n(W^1)$ seems still unknown.

 Since algebraic case reducing to the trigonometric case by the substitution $x=\cos \varphi$, the main questions concentrate around  best constants in the inequalities and how constants depends of the point position on the interval. This dependence  was first observed by  S. M. Nikolsky in the same paper, were was noticed  that for the  algebraic Favard means $U_{n-1}(f,x)$ which are an algebraic polynomial of degree less or equal $n-1$
 \begin{equation}
\label{intr_alg_nik_log}
|f(x)-U_{n-1}(f,x)| \leq \frac{K_1}{n}\sqrt{1-x^2}+ C\frac{\log n}{n^2}\sqrt{x^2}
\end{equation}
for all functions $f\in W^1$. Here $K_1$ is the first Favard constant. This is immediately corollary of Favard Theorem in trigonometric case, but S. M. Nikolsky also proved that for \textit{Favard means} $\log n$ factor can not be removed.

The last result indicate that in the algebraic case,  instead of trigonometric, the constant near the term $1/n$ depends on the position of the point $x$ on the interval $[-1,1]$ and the reminder term (if exist) disappear  near point $0$  and near point $1$ dominate.

Then many papers was dedicated to remove  $\log$ factor using some linear methods of approximation by algebraic polynomials and to generalise Nikolsky's type inequality to high derivatives. The factor was removed and theorems was generalised, but constants, which appeared in the proofs was given with no fine estimates. 

 V. N. Temlyakov in \cite{tem} improved constants in the Nikolsky's type inequality (\ref{intr_alg_nik_log})  but, of course, for different approximation polynomials and, of course, with a little bigger  factor near $\sqrt{1-x^2}$. But the factor near $\sqrt{x^2}$ (reminder term) have fine and simple expression.

\begin{equation}
\label{alg_tem}
|f(x)-P_{n}(f,x)| \leq  \frac{K_1}{n}
\sqrt{1-x^2}+ \frac{2 K_2}{n^2}\sqrt{x^2}
\end{equation}

Here $K_1$ and $K_2$ are first and second Favard constants.

In this note we found, in some sense, optimal factors near $\sqrt{1-x^2} $ and $\sqrt{x^2}$ in  (\ref{alg_tem}).

\begin{theorem}
\label{theo_main}
Let  $f(x)$ be a function defined on $\mathbb{I}$ and $\|f\|_{W^1}\leq 1$.

Then for every $n\geq 2$ there exist an algebraic polynomial $P_{n}(f,x)$ of degree less or equal $n$, such that
\begin{equation}
\label{alg_est_t_s}
|f(x)-P_{n}(f,x)| \leq T(n)   
\sqrt{1-x^2}+ S(n) \sqrt{x^2}
\end{equation}
where 
$$
T(n)=\tan\frac{\pi}{2n}
$$ 
and 
$$
S(n)=\sec\frac{\pi}{2n}-1
$$.

\end{theorem}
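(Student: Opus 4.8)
The plan is to reduce the algebraic problem on $\mathbb{I}=[-1,1]$ to the trigonometric estimates of Theorem \ref{theo_quasi_bern} by the substitution $x=\cos\theta$. Put $g(\theta)=f(\cos\theta)$ and $\psi(\varphi)=f'(\cos\varphi)$. Then $g$ is an even, $2\pi$-periodic, absolutely continuous function with $g'(\varphi)=-\psi(\varphi)\sin\varphi$ almost everywhere, while $\psi$ is even and $\|\psi\|_{L^\infty}\le\|f'\|_{L^\infty}\le 1$. Since $\widehat{\mathcal{B}_1}(k)=1/(ik)$ for $k\ne 0$, comparing Fourier coefficients gives the representation
\begin{equation}
g(\theta)=\widehat{g}(0)+\frac{1}{2\pi}\int_0^{2\pi}\mathcal{B}_1(\theta-\varphi)\,g'(\varphi)\,d\varphi ,
\end{equation}
valid because $\mathcal{B}_1$ is bounded and $g'\in L^\infty$.

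The decisive step is to split the Bernoulli kernel by the addition formula $\sin\varphi=\sin\theta\cos(\theta-\varphi)-\cos\theta\sin(\theta-\varphi)$, which produces precisely the two quasi-Bernoulli kernels of Theorem \ref{theo_quasi_bern}:
\begin{equation}
\mathcal{B}_1(\theta-\varphi)\sin\varphi=\sin\theta\,\mathcal{K}_1(\theta-\varphi)-\cos\theta\,\mathcal{K}_2(\theta-\varphi).
\end{equation}
Inserting this together with $g'(\varphi)=-\psi(\varphi)\sin\varphi$ into the representation yields
\begin{equation}
g(\theta)=\widehat{g}(0)+\cos\theta\,(\mathcal{K}_2\ast\psi)(\theta)-\sin\theta\,(\mathcal{K}_1\ast\psi)(\theta).
\end{equation}
I would then define the approximant by replacing each kernel with its best $L^1$ trigonometric approximant $\tau_{n-1}(\mathcal{K}_i)$ supplied by Theorem \ref{theo_quasi_bern}, choosing $\tau_{n-1}(\mathcal{K}_1)$ odd and $\tau_{n-1}(\mathcal{K}_2)$ even (possible by symmetrizing, since $\mathcal{K}_1$ is odd and $\mathcal{K}_2$ even), and setting $P_n(f,\cos\theta)=\widehat{g}(0)+\cos\theta\,(\tau_{n-1}(\mathcal{K}_2)\ast\psi)(\theta)-\sin\theta\,(\tau_{n-1}(\mathcal{K}_1)\ast\psi)(\theta)$.

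The error bound is then immediate: subtracting, the difference $g(\theta)-P_n(f,\cos\theta)$ equals $\cos\theta\,((\mathcal{K}_2-\tau_{n-1}(\mathcal{K}_2))\ast\psi)(\theta)-\sin\theta\,((\mathcal{K}_1-\tau_{n-1}(\mathcal{K}_1))\ast\psi)(\theta)$, and the elementary bound $|(K\ast\psi)(\theta)|\le\|K\|_{L^1}\|\psi\|_{L^\infty}$ with $\|\psi\|_{L^\infty}\le 1$ and Theorem \ref{theo_quasi_bern} gives
\begin{equation}
|g(\theta)-P_n(f,\cos\theta)|\le|\cos\theta|\,E_{n-1}(\mathcal{K}_2)_{L^1}+|\sin\theta|\,E_{n-1}(\mathcal{K}_1)_{L^1}=S(n)\sqrt{x^2}+T(n)\sqrt{1-x^2},
\end{equation}
using $|\cos\theta|=\sqrt{x^2}$ and $|\sin\theta|=\sqrt{1-x^2}$, which is exactly (\ref{alg_est_t_s}).

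The step I expect to require genuine care, and hence the main obstacle, is verifying that $P_n(f,x)$ is an honest algebraic polynomial in $x$ of degree $\le n$ rather than merely a function of $\theta$. This is a matter of tracking parity and degree: convolution with the even $\psi$ preserves parity, so $(\tau_{n-1}(\mathcal{K}_2)\ast\psi)$ is an even and $(\tau_{n-1}(\mathcal{K}_1)\ast\psi)$ an odd trigonometric polynomial of degree $\le n-1$; consequently $\cos\theta\,(\tau_{n-1}(\mathcal{K}_2)\ast\psi)$ and $\sin\theta\,(\tau_{n-1}(\mathcal{K}_1)\ast\psi)$ are both even (cosine) trigonometric polynomials of degree $\le n$, and via $\cos j\theta=T_j(\cos\theta)$ each becomes an algebraic polynomial of degree $\le n$ in $x=\cos\theta$. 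The remaining technical points — absolute continuity of $g$, the almost-everywhere chain rule, and termwise integration — are routine.
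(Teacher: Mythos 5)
Your proposal is correct and follows essentially the same route as the paper: the substitution $x=\cos\theta$, the representation of $g$ via $\mathcal{B}_1\ast g'$ split by the addition formula into the quasi-Bernoulli kernels $\mathcal{K}_1,\mathcal{K}_2$, replacement of each kernel by its best $L^1$ approximant from Theorem \ref{theo_quasi_bern}, the Young-type bound $\|K\ast\psi\|_\infty\le\|K\|_{L^1}\|\psi\|_\infty$, and conversion to an algebraic polynomial in the Chebyshev basis. Your two additions --- keeping the constant term $\widehat{g}(0)$ explicitly and enforcing the parity of $\tau_{n-1}(\mathcal{K}_1)$, $\tau_{n-1}(\mathcal{K}_2)$ by symmetrization --- are sound refinements of details the paper leaves implicit, not a different argument.
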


Before proof some remarks are in order. 

Asymptotically for big $n$
\begin{equation}
\label{intr_tem_T}
T(n)=\frac{K_1}{n}+ O(n^{-3})
\end{equation}
and 
\begin{equation}
\label{intr_tem_S}
 S(n)=\frac{K_2}{n^2}+O(n^{-4})
\end{equation}
Thus, factor near $\sqrt{1-x^2}$ is asymptotically the same as in Temlyakov's Theorem, but the  factor near $\sqrt{x^2}$ is $2$ times better.

We also note that the first factor $T(n)$ can not be equal or less than factor in Temlyakov's Theorem with the optimal second factor.

These estimate are sharp in the sense that factors near $\sqrt{1-x^2}$ and $\sqrt{x^2}$ can not be improved \textit{simultaneously} too much. The exact sense of the last remark will be a subject of forthcoming note.
\begin{proof}

First we transform  the question to the trigonometric case by substitution $x=\cos \theta$. This is a standard approach since \cite{favard_3} and \cite{nik} (see also \cite{bust}). After this the original problem of approximation by algebraic polynomials looks as follows.

Let function $g\in N^1(\mathbb{T})$, i. e. $g(\theta)$ is periodic with period $2\pi$, even, absolutely continues function, such that $g'(\theta)=h(\theta)\sin \theta$, where $h(\theta)\in L^\infty$. Define $\|g\|_{N^1}=\|h\|_{L^\infty}$.   We shall approximate $g(\theta)$ by trigonometric polynomials of degree less or equal $n-1$ and since in $N^1(\mathbb{T})$ the norm is not  translation invariant,  we expect that the desired estimate have to depend of the position of the point $\theta$ on $\mathbb{T}$.

As usual, we use  the representation (see e.g. \cite{devore}, p. 211 and \cite{tem})
\begin{equation}
\label{ker_repr}
g(\theta)=\int_0^{2\pi} \mathcal{B}_1(\theta -\varphi) h(\varphi)\sin\varphi d\varphi/2\pi=\sin \theta (h\ast \mathcal{K}_1)(\theta)-\cos \theta (h\ast \mathcal{K}_2)(\theta)
\end{equation}
where
\begin{equation}
\label{kry_1}
\mathcal{K}_1(\theta)=\mathcal{B}_1(\theta)\cos\theta
\end{equation}
and
\begin{equation}
\label{kry_1}
\mathcal{K}_2(\theta)=\mathcal{B}_1(\theta)\sin \theta
\end{equation}

Let $\tau_{n-1}(\mathcal{K}_1)$ and $\tau_{n-1}(\mathcal{K}_2)$ be a trigonometric polynomials of best approximation of $\mathcal{K}_1$ and $\mathcal{K}_2$ in $L^1$ metric.

Then 
\begin{equation}
\label{ker_repr}
g(\theta)-(h\ast \tau_{n-1}(\mathcal{K}_1))(\theta) \sin\theta + (h\ast \tau_{n-1}(\mathcal{K}_2)(\theta)\cos\theta =
\end{equation}
$$
= h\ast (\mathcal{K}_1-\tau_{n-1}(\mathcal{K}_1))(\theta)\sin \theta -h\ast (\mathcal{K}_2-\tau_{n-1}(\mathcal{K}_2))(\theta)\cos \theta 
$$
and hence
\begin{equation}
\label{ker_est}
| g(\theta)-t_n(g)(\theta)|\leq E_{n-1} (\mathcal{K}_1) |\sin\theta| +E_{n-1} (\mathcal{K}_2) |\cos\theta|
\end{equation}
for trigonometric polynomial
\begin{equation}
\label{ker_pol}
t_n(g)(\theta)=(h\ast \tau_{n-1}(\mathcal{K}_1))(\theta) \sin\theta - (h\ast \tau_{n-1}(\mathcal{K}_2)(\theta)\cos\theta 
\end{equation}
of the degree $n$.
This is standard arguments. The problem is how to find polynomials of best approximations of $\mathcal{K}$ kernels and how to calculate the values of these best approximations.

We apply Theorem \ref{theo_quasi_bern}, which implies that
\begin{equation}
\label{ker_est}
| g(\theta)-t_n(g)(\theta)|\leq T(n)|\sin\theta| +S(n)|\cos\theta|
\end{equation}
and after substitution  $x=\cos \theta$ we get (\ref{alg_est_t_s}) where $P_n(f,x)$ is a corresponding algebraic polynomial in the Tschebicheff basis.

\end{proof}

\begin{remark} In the book \cite{bust} Temlyakov's estimate (\ref{alg_tem}) (Theorem 2.3.2 on the page 14) printed with the wrong factor near $\sqrt{x^2}$ --- instead $2K_2/n^2$ author put $K_2/n^2$, probably since of misprint. But we shall to stress out, that estimate with factor $K_2/n^2$ near $\sqrt{x^2}$ is actually wrong. This will be a subject of forthcoming note.

\end{remark}

\end{document}